\documentclass[12pt]{amsart}
\usepackage[top=3cm,bottom=3cm,outer=2cm,inner=2cm,marginpar=1.5cm]{geometry}

\usepackage{amsmath, amssymb, amsfonts, url, xcolor}
\usepackage[colorlinks=true, linkcolor=blue, citecolor=purple, urlcolor=blue]{hyperref}	%hyperref box, link color
\usepackage{newpxtext} %font
\usepackage{verbatim}

\newtheorem{theorem}{Theorem}[section]
\newtheorem{lemma}{Lemma}[section]
\newtheorem{proposition}{Proposition}[section]
\newtheorem{corollary}{Corollary}[section]

\newtheorem*{question*}{Question}

\newtheorem{remark}{Remark}[section]
\newtheorem*{remark*}{Remark}

\numberwithin{equation}{section}

\makeatletter
\newcommand{\ie}{\emph{i.e.}\@ifnextchar.{\!\@gobble}{}}
\newcommand{\eg}{\emph{e.g.}\@ifnextchar.{\!\@gobble}{}}
\newcommand{\etc}{etc\@ifnextchar.{}{.\@}}
\newcommand{\abs}[1]{\lvert#1\rvert}
\makeatother

\newcommand{\C}{\mathbb{C}}

\title[Bottom of the Spectrum from Finite-Mass Exhaustions]{Bottom of the Spectrum of Complete K{\"a}hler Metrics from Finite-Mass Plurisubharmonic Exhaustions}

\author{Young-Jun Choi}
\address{Department of Mathematics, Pusan National University, 2, Busandaehak-ro 63beon-gil, Geumjeong-gu, Busan, 46241, Republic of Korea}
\email{youngjun.choi@pusan.ac.kr}

\author{Jiwon Brandon Jeong}
\address{Department of Mathematics, Pusan National University, 2, Busandaehak-ro 63beon-gil, Geumjeong-gu, Busan, 46241, Republic of Korea}
\email{jiwonjeong@pusan.ac.kr}

\date{\today}

\keywords{Bottom of the spectrum, Laplace--Beltrami operator, hyperconvex domain, plurisubharmonic exhaustion function, Monge--Amp{\`e}re mass, complete K{\"a}hler metric}

\subjclass[2020]{Primary 58J50; Secondary 32U10, 32U05, 32W20, 53C55}

\thanks{Corresponding author: Jiwon Brandon Jeong, \texttt{jiwonjeong@pusan.ac.kr}.}

\thanks{This work was supported by the National Research Foundation of Korea (NRF) grant funded by the Korea government (MSIT) (No. 2023R1A2C1007227).}

\begin{document}
	
	\begin{abstract}
		Let $\Omega\subset\C^{n}$ be a bounded domain, and let $\rho:\Omega\to[-1,0)$ be a smooth strictly plurisubharmonic exhaustion function. We consider the logarithmic potential $g=-\log(-\rho)$ and the associated complete K{\"a}hler metric $\omega=dd^{c} g$. We prove that if $\rho$ satisfies the finite weighted Monge--Amp{\`e}re mass condition
		\begin{equation*}
			\int_{\Omega} (-\rho)^{\varepsilon} (dd^{c} \rho)^{n} < +\infty
			\quad \text{for every } \varepsilon>0,
		\end{equation*}
		then the bottom of the spectrum of the Laplace--Beltrami operator of $(\Omega,\omega)$ satisfies $\lambda_{0}(\Delta_{\omega},\Omega)=n^{2}$.
		The lower bound follows from the standard estimate applied to $g$, together with the inequality $\abs{\partial g}_{\omega}^{2} \le 1$. For the reverse inequality, for each $\alpha > \frac{n}{2}$, we set $f=(-\rho)^{\alpha}$ and prove that
		\begin{equation*}
			f \in W^{1,2}(\Omega,\omega)
			\quad \Longleftrightarrow \quad
			\int_{\Omega} (-\rho)^{2\alpha-n}(dd^{c} \rho)^{n} < +\infty.
		\end{equation*}
		Under the finite weighted Monge--Amp{\`e}re mass condition, this allows us to let $\alpha\downarrow \frac{n}{2}$ in the Rayleigh quotient and obtain the upper bound $\lambda_{0}(\Delta_{\omega},\Omega)\le n^{2}$. As an application, Cegrell's theorem gives a smooth strictly plurisubharmonic exhaustion with finite Monge--Amp{\`e}re mass on every bounded hyperconvex domain; the associated complete K{\"a}hler metric constructed from this exhaustion therefore satisfies $\lambda_{0}(\Delta_{\omega},\Omega)=n^{2}$.
	\end{abstract}
	
	\maketitle
	
	\section{Introduction}
	
	Let $(M^{n},\omega)$ be a noncompact complete K{\"a}hler manifold of complex dimension $n$. In local holomorphic coordinates $(z^{1},\ldots,z^{n})$, we write
	\begin{equation*}
		\omega=\sqrt{-1}\sum_{i,j=1}^{n} g_{i\bar{j}}\,dz^{i}\wedge d\bar{z}^{j},
	\end{equation*}
	where $(g_{i\bar{j}})$ is positive definite. Since $\omega$ is K{\"a}hler, the Laplace--Beltrami operator admits the following expression in local holomorphic coordinates:
	\begin{equation*}
		\Delta_{\omega}
		=
		-4\sum_{i,j=1}^{n} g^{\bar{j} i}
		\frac{\partial^{2}}{\partial z^{i}\partial \bar{z}^{j}},
	\end{equation*}
	where $(g^{\bar{j} i})=(g_{i\bar{j}})^{-1}$. We define the bottom of the spectrum of $\Delta_{\omega}$ by
	\begin{equation*}
		\lambda_{0}(\Delta_{\omega},M)
		=
		\inf\left\{
		\frac{4\int_{M}\sum_{i,j=1}^{n} g^{\bar{j} i}f_{i} f_{\bar{j}}\,\omega^{n}}
		{\int_{M}\abs{f}^{2}\,\omega^{n}}
		\;:\;
		0\neq f\in C_{0}^{\infty}(M)
		\right\}.
	\end{equation*}
	Here and throughout this paper, we use the convention
	\begin{equation*}
		d^{c}u:=\frac{\sqrt{-1}}{2}(\bar\partial-\partial)u,
		\qquad
		dd^{c} u=\sqrt{-1}\partial\bar\partial u.
	\end{equation*}
	The normalization constant in the Riemannian volume form does not affect the Rayleigh quotient, and therefore we shall use $\omega^{n}$ as the reference volume measure.
	
	The bottom of the spectrum on noncompact complete K{\"a}hler manifolds is closely related to curvature, potential theory, and boundary geometry. Li and Wang obtained the upper bound $\lambda_{0}\le n^{2}$ under a lower bound on the bisectional curvature \cite{2005LW}. Munteanu later proved the same upper bound under the weaker assumption that the Ricci curvature satisfies $\mathrm{Ric}(\omega)\ge -(n+1)\omega$ \cite{2009Munteanu}. These bounds are optimal, as equality is achieved by the unit ball equipped with the complete K{\"a}hler--Einstein metric with Ricci curvature $-(n+1)$.
	
	On the other hand, Li and Tran computed the bottom of the spectrum for complete K{\"a}hler metrics defined by potentials of the form $-\log(-r)$ on certain bounded pseudoconvex domains. Their result is one of the main starting points of the present paper.
	
	\begin{theorem}[Li--Tran \cite{2010LT}]\label{thm:litran_intro}
		Let $\Omega\subset\C^{n}$ be a smoothly bounded strongly pseudoconvex domain, and let $r\in C^{2}(\C^{n})$ be a strictly plurisubharmonic defining function for $\Omega$. Put $u=-\log(-r)$, and let $\Delta_{u}$ denote the Laplace--Beltrami operator associated with the K{\"a}hler metric tensor $(u_{i\bar{j}})$. Then
		\begin{equation*}
			\lambda_{0}(\Delta_{u},\Omega)=n^{2}.
		\end{equation*}
	\end{theorem}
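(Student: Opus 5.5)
The proof has two halves: a lower bound $\lambda_{0}\ge n^{2}$ from the gradient bound on the potential, and an upper bound $\lambda_{0}\le n^{2}$ from test functions that are powers of $-r$.

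\emph{Lower bound.} Write $\omega=dd^{c}u$ with $u=-\log(-r)$. Then
\begin{equation*}
u_{i\bar j}=\frac{r_{i\bar j}}{-r}+\frac{r_{i}r_{\bar j}}{r^{2}}.
\end{equation*}
This is positive definite because $r$ is strictly plurisubharmonic. By the Sherman--Morrison formula,
\begin{equation*}
\abs{\partial u}_{\omega}^{2}=\frac{\abs{\partial r}_{r}^{2}/(-r)}{1+\abs{\partial r}_{r}^{2}/(-r)}\le 1,
\end{equation*}
where $\abs{\partial r}_{r}^{2}$ is computed with $(r_{i\bar j})^{-1}$. We also have $\sum g^{\bar j i}u_{i\bar j}=n$, so $\Delta_{\omega}u=-4n$ in the paper's sign convention. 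Now take $f\in C_{0}^{\infty}(\Omega)$ real. Integrating by parts gives
\begin{equation*}
4n\int f^{2}\omega^{n}=4\int\langle\partial(f^{2}),\partial u\rangle\,\omega^{n}\le 8\Bigl(\int f^{2}\abs{\partial u}^{2}\Bigr)^{1/2}\Bigl(\int\abs{\partial f}^{2}\Bigr)^{1/2}.
\end{equation*}
Using $\abs{\partial u}\le1$, this yields $n^{2}\int f^{2}\le\int\abs{\partial f}^{2}$. This is Rayleigh quotient $\ge 4n^{2}/4=n^{2}$ with the factor $4$ in the numerator. Complex $f$ reduce to real ones by splitting into real and imaginary parts.

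\emph{Upper bound.} I would test the Rayleigh quotient against $f=(-r)^{\alpha}$ for $\alpha>n/2$, suitably cut off. First, the volume. Since $r$ is $C^{2}$ up to the boundary and $dr\ne0$ there, the determinant formula gives
\begin{equation*}
\omega^{n}=\frac{(dd^{c}r)^{n}}{(-r)^{n}}\Bigl(1+\frac{\abs{\partial r}_{r}^{2}}{-r}\Bigr)\sim(-r)^{-n-1}\,dV.
\end{equation*}
Hence $\int f^{2}\omega^{n}\sim\int(-r)^{2\alpha-n-1}dV$, which is finite precisely when $\alpha>n/2$ and behaves like $\frac{1}{2\alpha-n}$ as $\alpha\downarrow n/2$.

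Next, the gradient. We have $\partial f=-\alpha f\,\partial u$, so
\begin{equation*}
4\abs{\partial f}_{\omega}^{2}=4\alpha^{2}f^{2}\abs{\partial u}_{\omega}^{2}\le 4\alpha^{2}f^{2},
\end{equation*}
with $\abs{\partial u}^{2}=1-O(-r)$ near $\partial\Omega$. The factor $4$ must be matched against the normalization. With $\abs{\partial f}^{2}$ meaning $\sum g^{\bar j i}f_{i}f_{\bar j}$, the quotient of $f$ is at most $4\alpha^{2}\cdot(\text{stuff})$. The normalization conventions then need to be reconciled so that $\alpha\to n/2$ gives $n^{2}$. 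Consistently with the lower bound computed in the same normalization, the quotient is
\begin{equation*}
\le \alpha^{2}\frac{\int f^{2}\abs{\partial u}^{2}}{\int f^{2}}\le\alpha^{2}\cdot c,
\end{equation*}
which tends to $n^{2}$ times the matching constant.

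Because $f$ is not compactly supported, I would truncate with a cutoff $\chi_{k}(u)$ and use completeness to justify passing to $W^{1,2}$ functions. The error terms are controlled because $f\in W^{1,2}(\Omega,\omega)$ for $\alpha>n/2$. Letting $\alpha\downarrow n/2$ gives $\lambda_{0}\le n^{2}$.

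\emph{Main obstacle.} The delicate step is the precise bookkeeping that makes the upper bound sharp. The factor $\abs{\partial u}^{2}\le1$ suffices here, since $\abs{\partial u}^{2}<1$ only helps. The real issue is verifying that the boundary volume blow-up makes $f\in W^{1,2}$ exactly for $\alpha>n/2$, and that the Rayleigh quotient of $f$ genuinely approaches the same constant as the lower bound, with the normalizations consistent on both sides.
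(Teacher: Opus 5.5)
Your two-step plan is the right one, and it is the scheme the paper uses for its generalization: a lower bound from $\abs{\partial u}_{\omega}^{2}\le1$ via the Li--Tran integration by parts, and an upper bound from $f=(-r)^{\alpha}$ with $\alpha\downarrow\frac{n}{2}$. As written, however, the constants do not close. The ``normalization reconciliation'' you list as the main obstacle comes from an arithmetic slip.

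Your displayed inequality $4n\int f^{2}\omega^{n}\le 8\bigl(\int f^{2}\omega^{n}\bigr)^{1/2}\bigl(\int\abs{\partial f}_{\omega}^{2}\omega^{n}\bigr)^{1/2}$ gives $n^{2}\int f^{2}\omega^{n}\le 4\int\abs{\partial f}_{\omega}^{2}\omega^{n}$, not $n^{2}\int f^{2}\le\int\abs{\partial f}^{2}$. The corrected inequality says exactly that the quotient with the factor $4$ is at least $n^{2}$. The version you wrote is actually false: combined with the correct bound $\int\abs{\partial f}_{\omega}^{2}\omega^{n}\le\alpha^{2}\int f^{2}\omega^{n}$ for $f=(-r)^{\alpha}$, it would force $\alpha\ge n$.

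For the upper bound, simply keep the $4$: $4\abs{\partial f}_{\omega}^{2}=4\alpha^{2}f^{2}\abs{\partial u}_{\omega}^{2}\le4\alpha^{2}f^{2}$. So the Rayleigh quotient of $f$ is at most $4\alpha^{2}$, which tends to $n^{2}$ as $\alpha\downarrow\frac{n}{2}$. You need no matching constant, no asymptotics $\abs{\partial u}^{2}=1-O(-r)$, and no $\frac{1}{2\alpha-n}$ behaviour of the volume integral. You only need $f\in L^{2}(\Omega,\omega^{n})$, which follows for $\alpha>\frac{n}{2}$ from your comparison $\omega^{n}\lesssim(-r)^{-n-1}dV$ together with $-r\asymp\delta$ (distance to $\partial\Omega$). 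Then $\abs{\partial f}_{\omega}\le\alpha f$ gives $f\in W^{1,2}$. Your cutoff $\chi_{k}(u)$, with $\abs{\partial\chi_{k}(u)}_{\omega}\le\sup\abs{\chi_{k}'}$, makes $f$ an admissible test function.

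Once corrected, your upper bound is a genuine simplification of the paper's route. The paper cites this statement rather than reproving it. For its main theorem it establishes the exact identity $4\int\abs{\partial f}_{\omega}^{2}\omega^{n}=2\alpha n\int f^{2}\omega^{n}$ by Stokes' theorem on sublevel sets. It needs that computation because, without boundary regularity, the Stokes identity is what controls the $\abs{\partial\rho}^{2}$ term in $\omega^{n}$ and turns the weighted Monge--Amp{\`e}re mass condition into $f\in L^{2}$. With a $C^{2}$ defining function, integrability is immediate, and the pointwise bound $\abs{\partial u}_{\omega}^{2}\le1$ already gives the sharp limit. The paper's route, in exchange, gives the smaller exact quotient $2\alpha n<4\alpha^{2}$ and works without any boundary regularity.
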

	
	Theorem~\ref{thm:litran_intro} is formulated in terms of a defining function with sufficient regularity up to the boundary. We ask whether the same conclusion holds for an exhaustion function defined only inside the domain. The purpose of this paper is to extend the Li--Tran computation for metrics defined by logarithmic potentials from boundary-regular defining functions to smooth plurisubharmonic exhaustion functions satisfying a finite weighted Monge--Amp{\`e}re mass condition.
	
	Let $\Omega\subset\C^{n}$ be a bounded domain and let
	\begin{equation*}
		\rho:\Omega\to[-1,0)
	\end{equation*}
	be a smooth strictly plurisubharmonic exhaustion function. We set
	\begin{equation}\label{eq:intro_metric}
		g=-\log(-\rho),
		\qquad
		\omega=dd^{c} g.
	\end{equation}
	Then $\omega$ is a complete K{\"a}hler metric on $\Omega$. Our main theorem computes the bottom of the spectrum of this complete K{\"a}hler metric.
	
	\begin{theorem}\label{main_thm}
		Let $\Omega\subset\C^{n}$ be a bounded domain, and let $\rho:\Omega\to[-1,0)$ be a smooth strictly plurisubharmonic exhaustion function. Put
		\begin{equation*}
			g=-\log(-\rho),
			\qquad
			\omega=dd^{c} g.
		\end{equation*}
		Assume that
		\begin{equation}\label{eq:weighted_mass_condition}
			\int_{\Omega} (-\rho)^{\varepsilon} (dd^{c} \rho)^{n}<+\infty
			\quad\text{for every }\varepsilon>0.
		\end{equation}
		Then $(\Omega,\omega)$ is complete and
		\begin{equation*}
			\lambda_{0}(\Delta_{\omega},\Omega)=n^{2}.
		\end{equation*}
		In particular, the conclusion holds if $\rho$ has finite Monge--Amp{\`e}re mass, namely
		\begin{equation*}
			\int_{\Omega}(dd^{c}\rho)^{n}<+\infty.
		\end{equation*}
	\end{theorem}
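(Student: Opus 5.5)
We first establish completeness and the lower bound $\lambda_0\ge n^2$, and then prove the upper bound by a family of test functions $(-\rho)^{\alpha}$ with $\alpha\downarrow n/2$.

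\textbf{Completeness and the gradient bound.} With $g=-\log(-\rho)$ we have
\begin{equation*}
\omega=dd^c g=\frac{dd^c\rho}{-\rho}+\frac{\sqrt{-1}\,\partial\rho\wedge\bar\partial\rho}{\rho^{2}}=\frac{dd^c\rho}{-\rho}+\sqrt{-1}\,\partial g\wedge\bar\partial g .
\end{equation*}
Since $dd^c\rho>0$, the Sherman--Morrison formula gives
\begin{equation*}
\abs{\partial g}^2_\omega=\frac{a}{1+a}<1,\qquad a=\frac{\abs{\partial\rho}^2_{dd^c\rho}}{-\rho}.
\end{equation*}
Hence $\log(-\rho)$ has bounded $\omega$-gradient and tends to $-\infty$ at $\partial\Omega$, because $\rho$ is an exhaustion. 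A function with bounded gradient that is proper on $\Omega$ forces every bounded set to be relatively compact, so $\omega$ is complete.

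\textbf{Lower bound.} Compute $\Delta_\omega g=-4\,\mathrm{tr}_\omega dd^c g=-4n$. For real $f\in C_0^\infty$, integration by parts gives
\begin{equation*}
4n\int f^2\omega^n=\int f^2(-\Delta_\omega g)\,\omega^n=-\int\langle \nabla f^2,\nabla g\rangle\,\omega^n,
\end{equation*}
with the normalization in which $\abs{\nabla h}^2=4\abs{\partial h}^2_\omega$ for real $h$. Cauchy--Schwarz, together with $\abs{\nabla g}\le 2$, yields
\begin{equation*}
4n\|f\|^2\le 2\cdot 2\|f\|\,\|\nabla f\|,
\end{equation*}
that is, $\int\abs{\nabla f}^2\ge n^2\int f^2$. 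This is the standard estimate, and it gives $\lambda_0\ge n^2$.

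\textbf{Upper bound.} Set $f=(-\rho)^\alpha$. Then $\abs{\nabla f}^2=4\alpha^2(-\rho)^{2\alpha}\abs{\partial g}^2_\omega\le 4\alpha^2 f^2$. The key computation is the volume form. From the rank-one update,
\begin{equation*}
\omega^n=(-\rho)^{-n}(1+a)\,(dd^c\rho)^n .
\end{equation*}
Also $(dd^c\rho)^{n-1}\wedge\sqrt{-1}\,\partial\rho\wedge\bar\partial\rho=\frac1n\abs{\partial\rho}^2_{dd^c\rho}(dd^c\rho)^n$, which produces the term $a$. Thus
\begin{equation*}
\int f^2\omega^n=\int(-\rho)^{2\alpha-n}(dd^c\rho)^n+\int(-\rho)^{2\alpha-n-1}\,\sqrt{-1}\,\partial\rho\wedge\bar\partial\rho\wedge(dd^c\rho)^{n-1}.
\end{equation*}
\begin{itemize}
\item The first term is finite by \eqref{eq:weighted_mass_condition} with $\varepsilon=2\alpha-n>0$.
\item For the second term, write $(-\rho)^{2\alpha-n-1}\partial\rho=-\frac{1}{2\alpha-n}\,\partial(-\rho)^{2\alpha-n}$ and integrate by parts (Stokes) against $\bar\partial\rho\wedge(dd^c\rho)^{n-1}$. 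This converts it into a multiple of $\int(-\rho)^{2\alpha-n}(dd^c\rho)^n$ plus a boundary term on $\{\rho=-\delta\}$, namely $\frac{\delta^{2\alpha-n}}{2\alpha-n}\int_{\rho<-\delta}(dd^c\rho)^n$.
\end{itemize}
This boundary term is the main obstacle. It is bounded by $\frac{1}{2\alpha-n}\int_{\rho<-\delta}(-\rho)^{2\alpha-n}(dd^c\rho)^n$, which stays bounded, so it can be handled by carrying out the integration by parts on sublevel sets with cutoffs $\chi(\rho)$. In this way one establishes that $\int f^2\omega^n<\infty$ exactly when the weighted mass is finite, and similarly $\int\abs{\nabla f}^2<\infty$. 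Cutting off by $\chi_k(g)$ with $\abs{\nabla\chi_k}\le 1/k$ then shows $f\in W^{1,2}_0$, and the completeness established above allows this approximation, so $f$ is an admissible competitor in the Rayleigh quotient.

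\textbf{Conclusion.} We obtain $\lambda_0\le 4\alpha^2\cdot\frac14\cdot$(normalization), that is, $\lambda_0\le\alpha^2$ after matching the Rayleigh normalization: the quotient $4\int g^{\bar j i}f_if_{\bar j}$ equals $4\alpha^2 f^2\abs{\partial g}^2_\omega\le4\alpha^2f^2$. One must reconcile this factor 4 carefully with the lower bound. A more refined choice, or the identity $\Delta f=-4\alpha f(\ldots)$, gives a Rayleigh quotient tending to $n^2$ as $\alpha\downarrow n/2$, exploiting that $\int f^2\omega^n\to\infty$ so that the non-sharp contributions are negligible. This normalization bookkeeping is the second delicate point. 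Letting $\alpha\downarrow n/2$ gives $\lambda_0\le n^2$, and combined with the lower bound, $\lambda_0(\Delta_\omega,\Omega)=n^2$.
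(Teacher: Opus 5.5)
Everything before your final paragraph holds up. That includes completeness, your direct derivation of the Li--Tran estimate from $\Delta_\omega g=-4n$ and $\abs{\nabla g}\le 2$, and the membership $f=(-\rho)^{\alpha}\in W^{1,2}$ justified by the explicit cutoff $\chi_k(g)$. You dropped a factor $n$ in front of the $\sqrt{-1}\,\partial\rho\wedge\bar\partial\rho\wedge(dd^c\rho)^{n-1}$ term, but this is harmless for finiteness.

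\textbf{The gap is in the conclusion.} You assert $\lambda_0\le\alpha^2$. This is a bookkeeping error: letting $\alpha\downarrow n/2$ it would give $\lambda_0\le n^2/4$, which contradicts your own lower bound. You then replace the argument with an unspecified ``more refined choice'' that exploits $\int f^2\omega^n\to\infty$. As written, the upper bound is therefore not proved.

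The repair is already in your text. In the normalization you used for the lower bound, the Rayleigh quotient is $\int\abs{\nabla f}^2\omega^n/\int f^2\omega^n$. Moreover $\abs{\nabla f}^2=4\abs{\partial f}^2_\omega=4\alpha^2f^2\abs{\partial g}^2_\omega\le 4\alpha^2 f^2$. Hence $\lambda_0\le 4\alpha^2$ for every $\alpha>n/2$, and $4\alpha^2\downarrow n^2$. There is no factor $4$ to reconcile, and no refinement is needed.

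\textbf{Comparison with the paper.} With this repair, your upper bound takes a different and shorter route than the paper's. The paper proves that the boundary term $\varepsilon^{2\alpha-n}\int_{\Omega_\varepsilon}(dd^c\rho)^n$ tends to $0$, by splitting $\Omega_\varepsilon$ at a fixed level $\eta$ and using the weighted mass. This yields the exact identities $\int f^2\omega^n=\frac{2\alpha}{2\alpha-n}\int(-\rho)^{2\alpha-n}(dd^c\rho)^n$ and $4\int\abs{\partial f}^2_\omega\omega^n=2\alpha n\int f^2\omega^n$. The quotient $2\alpha n$ is sharper than your $4\alpha^2$ but has the same limit.

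Your route needs only $f\in L^2(\omega^n)$, and for that the boundary term is no obstacle. With $p=2\alpha-n$, it enters the formula for $\int_{\Omega_\delta}(-\rho)^{p-1}\abs{\partial\rho}^2(dd^c\rho)^n$ with a minus sign. In any case $\delta^{p}\int_{\Omega_\delta}(dd^c\rho)^n\le\int_{\Omega_\delta}(-\rho)^p(dd^c\rho)^n$, as you observed. So the pointwise bound $\abs{\partial g}_\omega\le 1$ does the rest.

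The paper's computation buys exact formulas for the $L^2$ norm and the energy. Yours buys brevity, and the cutoff $\chi_k(g)$ justifies that $f$ is an admissible competitor more explicitly than the paper's appeal to the Dirichlet form domain.
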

	
	For the lower bound, we use the standard estimate in the form used by Li--Tran. For the upper bound, the boundary regularity used in the defining function setting is replaced by the weighted Monge--Amp{\`e}re mass condition \eqref{eq:weighted_mass_condition}. More precisely, for $\alpha>\frac{n}{2}$, define
	\begin{equation*}
		f=(-\rho)^{\alpha}.
	\end{equation*}
	We prove the following equivalence:
	\begin{equation}\label{eq:intro_sobolev_criterion}
		f\in W^{1,2}(\Omega,\omega)
		\quad\Longleftrightarrow\quad
		\int_{\Omega} (-\rho)^{2\alpha-n}(dd^{c} \rho)^{n}<+\infty.
	\end{equation}
	For these test functions, the same identities give the exact Rayleigh quotient
	\begin{equation}\label{eq:intro_rayleigh}
		\frac{4\int_{\Omega} \abs{\partial f}_{\omega}^{2}\,\omega^{n}}
		{\int_{\Omega} f^{2}\,\omega^{n}}
		=2\alpha n.
	\end{equation}
	Thus \eqref{eq:weighted_mass_condition} allows us to use $f$ for every $\alpha>\frac{n}{2}$, and letting $\alpha\downarrow \frac{n}{2}$ gives the upper bound $\lambda_{0}(\Delta_{\omega},\Omega)\le n^{2}$.
	
	The finite-mass case is included in Theorem~\ref{main_thm}: if $\int_{\Omega}(dd^{c}\rho)^{n}<+\infty$, then the weighted mass condition follows from $0<(-\rho)^{\varepsilon}\le1$.
	
	We now record the hyperconvex case. A domain $\Omega\subset\C^{n}$ is called \emph{hyperconvex} if it admits a negative plurisubharmonic exhaustion function $\psi\in PSH^{-}(\Omega)$ such that
	\begin{equation*}
		\{z\in\Omega:\psi(z)<c\}\Subset\Omega,
		\quad
		\forall c<0.
	\end{equation*}
	By Corollary~1.3 of Cegrell~\cite{2009Cegrell}, every bounded hyperconvex domain admits a strictly plurisubharmonic exhaustion function in $\mathcal{E}_{0}(\Omega)\cap C^{\infty}(\Omega)$. Related finite-exhaustion results in the broader setting of $m$-hyperconvex domains can be found in \cite{2018Ahag}. Since functions in $\mathcal{E}_{0}(\Omega)$ have finite Monge--Amp{\`e}re mass and vanish at the boundary, Theorem~\ref{main_thm} gives the following application.
	
	\begin{corollary}\label{cor:hyperconvex}
		Let $\Omega\subset\C^{n}$ be a bounded hyperconvex domain. Then there exists a smooth strictly plurisubharmonic exhaustion function $\rho:\Omega\to[-1,0)$ such that the complete K{\"a}hler metric
		\begin{equation*}
			\omega=dd^{c}(-\log(-\rho))
		\end{equation*}
		satisfies
		\begin{equation*}
			\lambda_{0}(\Delta_{\omega},\Omega)=n^{2}.
		\end{equation*}
	\end{corollary}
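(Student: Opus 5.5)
The plan is to reduce Corollary~\ref{cor:hyperconvex} to Theorem~\ref{main_thm}. We produce an exhaustion $\rho$ satisfying all its hypotheses: smooth, strictly plurisubharmonic, exhaustive, with values in $[-1,0)$, and of finite Monge--Amp\`ere mass. By Corollary~1.3 of Cegrell~\cite{2009Cegrell}, there is a function $\psi\in\mathcal{E}_{0}(\Omega)\cap C^{\infty}(\Omega)$ that is strictly plurisubharmonic on $\Omega$. Membership in $\mathcal{E}_{0}(\Omega)$ means that $\psi$ is a bounded negative plurisubharmonic function with
\begin{equation*}
	\lim_{z\to\partial\Omega}\psi(z)=0
	\qquad\text{and}\qquad
	\int_{\Omega}(dd^{c}\psi)^{n}<+\infty .
\end{equation*}

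Next we normalize. Since $\psi$ is bounded and negative, $M:=-\inf_{\Omega}\psi$ lies in $(0,+\infty)$. Put $\rho:=\psi/M$. Then $\rho$ is smooth and strictly plurisubharmonic, because positive scaling preserves both properties. It takes values in $[-1,0)$: strict negativity holds because $\psi<0$ on $\Omega$, by the maximum principle, since $\psi$ is nonconstant and tends to $0$ at the boundary. The sublevel sets $\{\rho<c\}=\{\psi<Mc\}$ are relatively compact for every $c<0$, because $\psi\to0$ at $\partial\Omega$; hence $\rho$ is an exhaustion. Finally,
\begin{equation*}
	\int_{\Omega}(dd^{c}\rho)^{n}=M^{-n}\int_{\Omega}(dd^{c}\psi)^{n}<+\infty .
\end{equation*}

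With these hypotheses in place, the ``in particular'' clause of Theorem~\ref{main_thm} applies. Because $0<(-\rho)^{\varepsilon}\le 1$, finite mass implies the weighted condition \eqref{eq:weighted_mass_condition}. The theorem therefore yields that $\omega=dd^{c}(-\log(-\rho))$ is complete and $\lambda_{0}(\Delta_{\omega},\Omega)=n^{2}$.

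The only point requiring care is the precise content of Cegrell's result. One must confirm that the function it provides is genuinely smooth and \emph{strictly} plurisubharmonic on all of $\Omega$, and lies in $\mathcal{E}_{0}$, including finite total mass, rather than just having finite mass on compact sets. I would check this against the statement of \cite[Corollary~1.3]{2009Cegrell}. If only plurisubharmonicity were given, I would add a small multiple of $\abs{z}^{2}-R^{2}$, with $R$ large, and re-verify the boundary behaviour. In the stated form of the citation, however, no such modification is needed.
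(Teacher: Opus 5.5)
Your proof is correct and follows the paper's argument. You take the smooth strictly plurisubharmonic $\psi\in\mathcal{E}_{0}(\Omega)$ from Cegrell's Corollary~1.3, rescale it into $[-1,0)$, and apply the finite-mass clause of Theorem~\ref{main_thm}; you make the scaling constant $M=-\inf_{\Omega}\psi$ and the exhaustion and mass checks explicit, which the paper leaves implicit. Your fallback of adding $\varepsilon(\abs{z}^{2}-R^{2})$ would not work, because the modified function no longer tends to $0$ at $\partial\Omega$ and so fails the exhaustion hypothesis; this does not matter, since Cegrell's result already gives strict plurisubharmonicity.
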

	
	The paper is organized as follows. In Section~\ref{sec2}, we recall Cegrell's finite-mass exhaustion theorem, construct the complete K{\"a}hler metric defined by $g=-\log(-\rho)$, and collect the basic identities used in the proof. In Section~\ref{sec3}, we study the test functions $f=(-\rho)^{\alpha}$. In Section~\ref{sec4}, we prove Theorem~\ref{main_thm} and the hyperconvex corollary.

	\section{Preliminaries}\label{sec2}
	
	In this section, we review the preliminary results needed for the proof of Theorem~\ref{main_thm}. We first recall the consequence of Corollary~1.3 of Cegrell that guarantees the existence of a smooth strictly plurisubharmonic exhaustion function with finite Monge--Amp{\`e}re mass on any bounded hyperconvex domain. We then construct the complete K{\"a}hler metric associated with the potential $g=-\log(-\rho)$ and establish the basic identities used later.
	
	Recall that Cegrell's class $\mathcal{E}_{0}$ consists of negative bounded plurisubharmonic functions $u$ on $\Omega$ such that
	\begin{equation*}
		\lim_{z\to\xi}u(z)=0,
		\quad
		\forall\xi\in\partial\Omega,
		\quad
		\text{and}
		\quad
		\int_{\Omega} (dd^{c} u)^{n}<+\infty.
	\end{equation*}
	We refer to \cite{1998Cegrell,2004Cegrell,2009Cegrell} for the basic properties of this class. The following is Corollary~1.3 of Cegrell \cite{2009Cegrell}, stated in the form needed below.
	
	\begin{theorem}[Cegrell \cite{2009Cegrell}]\label{cegrell_thm}
		Let $\Omega\subset\C^{n}$ be a bounded hyperconvex domain. Then there exists a strictly plurisubharmonic exhaustion function
		\begin{equation*}
			\rho\in\mathcal{E}_{0}(\Omega)\cap C^{\infty}(\Omega).
		\end{equation*}
		After multiplying by a sufficiently small positive constant, one may choose $\rho:\Omega\to[-1,0)$ to be a smooth strictly plurisubharmonic exhaustion function satisfying
		\begin{equation*}
			\int_{\Omega} (dd^{c} \rho)^{n}<+\infty.
		\end{equation*}
	\end{theorem}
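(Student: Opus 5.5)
The statement to be proved is Theorem~\ref{cegrell_thm}. Its first assertion is exactly Corollary~1.3 of Cegrell \cite{2009Cegrell}, and I will take it as given. So the plan only needs to justify the normalization step: after rescaling, the function takes values in $[-1,0)$, is still a smooth strictly plurisubharmonic exhaustion, and has finite Monge--Amp\`ere mass.

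\textbf{Step 1: rescaling preserves the structure.} Let $u\in\mathcal{E}_{0}(\Omega)\cap C^{\infty}(\Omega)$ be the strictly plurisubharmonic exhaustion given by Cegrell. By definition of $\mathcal{E}_{0}$, $u$ is negative and bounded, so $M:=\sup_{\Omega}(-u)=-\inf_\Omega u$ is finite. Moreover $M>0$, since $u<0$. Put $\rho:=u/M$, so $c=1/M$ is the scaling constant.

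Then $\rho$ is smooth. It is strictly plurisubharmonic, because $dd^{c}\rho=c\,dd^{c}u>0$. It satisfies $-1\le\rho<0$. It still has boundary limit $0$ at every $\xi\in\partial\Omega$. Hence each sublevel set $\{\rho<-t\}$, for $t\in(0,1]$, equals $\{u<-tM\}$, which is relatively compact, so $\rho$ is an exhaustion. (If one prefers values in the open interval, any smaller $c$ works equally well.)

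\textbf{Step 2: the mass stays finite.} Since $(dd^{c}u)^{n}$ is a genuine smooth positive volume form and $(dd^c\rho)^n=c^{n}(dd^c u)^n$, multilinearity of the Monge--Amp\`ere operator gives
\begin{equation*}
\int_\Omega (dd^c\rho)^n=c^n\int_\Omega(dd^cu)^n<+\infty .
\end{equation*}
The integral on the right is finite because $u\in\mathcal{E}_0$.

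\textbf{Main obstacle.} The only nontrivial point is the existence of a \emph{smooth} strictly plurisubharmonic element of $\mathcal{E}_0$. This is the content of Cegrell's Corollary~1.3, which we invoke rather than reprove. If one wanted to sketch it, one would start from a bounded exhaustion and regularize it while preserving the zero boundary values and finite mass, then add a small multiple of $\abs{z}^2-R^2$ (handled via a max-construction) to force strictness.

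Everything else above is routine scaling.
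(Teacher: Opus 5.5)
Your proposal is correct and follows the same route as the paper. The paper also simply cites Cegrell's Corollary~1.3 for the existence of $u\in\mathcal{E}_{0}(\Omega)\cap C^{\infty}(\Omega)$, then rescales by a positive constant, noting (as you verify) that this preserves strict plurisubharmonicity, the exhaustion property, and finite mass via $(dd^{c}(cu))^{n}=c^{n}(dd^{c}u)^{n}$. Your closing heuristic sketch of Cegrell's result is not needed, and as stated it is incomplete: a single max-construction only yields strictness on a compact set. It plays no role in your argument, though.
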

	
	\begin{remark}
		The normalization $\rho:\Omega\to[-1,0)$ is used only to simplify notation. Multiplication by a positive constant preserves strict plurisubharmonicity and finite Monge--Amp{\`e}re mass.
	\end{remark}
	
	Let now $\Omega\subset\C^{n}$ be a bounded domain and let $\rho:\Omega\to[-1,0)$ be a smooth strictly plurisubharmonic exhaustion function. We set
	\begin{equation}\label{eq:g_omega_def}
		g=-\log(-\rho),
		\qquad
		\omega=dd^{c} g.
	\end{equation}
	For the derivatives of $\rho$, we write
	\begin{equation*}
		\rho_{i}=\frac{\partial\rho}{\partial z^{i}},
		\qquad
		\rho_{i\bar{j}}=\frac{\partial^{2}\rho}{\partial z^{i}\partial \bar{z}^{j}},
	\end{equation*}
	and denote by $(\rho^{\bar{j} i})$ the inverse matrix of $(\rho_{i\bar{j}})$. We define
	\begin{equation}\label{eq:rho_norm}
		\abs{\partial\rho}^{2}
		:=
		\sum_{i,j=1}^{n}\rho^{\bar{j} i}\rho_{i}\rho_{\bar{j}}.
	\end{equation}
	This is the norm of $\partial\rho$ with respect to the Hermitian form $dd^{c} \rho$.
	
	The following lemma summarizes the basic properties of the metric defined by $g=-\log(-\rho)$.
	
	\begin{lemma}\label{lem_basic}
		Let $\Omega\subset\C^{n}$ be a bounded domain, and let $\rho:\Omega\to[-1,0)$ be a smooth strictly plurisubharmonic exhaustion function. Set
		\begin{equation*}
			g=-\log(-\rho),
			\qquad
			\omega=dd^{c}g.
		\end{equation*}
		Then the following statements hold:
		\begin{itemize}
			\item[(\romannumeral1)] $g$ is strictly plurisubharmonic, and $\omega$ is a complete K{\"a}hler metric on $\Omega$;
			\item[(\romannumeral2)] If
			\begin{equation*}
				\abs{\partial g}_{\omega}^{2}
				=
				\sum_{i,j=1}^{n}g^{\bar{j}i}g_{i} g_{\bar{j}},
			\end{equation*}
			where $(g^{\bar{j}i})=(g_{i\bar{j}})^{-1}$, then
			\begin{equation}\label{eq:g_norm_bound}
				\abs{\partial g}_{\omega}^{2}\le 1
				\quad\text{on }\Omega;
			\end{equation}
			\item[(\romannumeral3)] The Monge--Amp{\`e}re measure $\omega^{n}=(dd^{c}g)^{n}$ is given by
			\begin{equation}\label{eq:det_formula}
				\omega^{n}
				=
				(-\rho)^{-n-1}\left(\abs{\partial\rho}^{2}-\rho\right)(dd^{c}\rho)^{n}.
			\end{equation}
		\end{itemize}
	\end{lemma}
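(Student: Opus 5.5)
The plan is to compute everything explicitly in terms of $\rho$. Differentiating $g=-\log(-\rho)$ gives
\begin{equation*}
g_{i}=\frac{\rho_{i}}{-\rho},\qquad
g_{i\bar{j}}=\frac{\rho_{i\bar{j}}}{-\rho}+\frac{\rho_{i}\rho_{\bar{j}}}{\rho^{2}}
=\frac{1}{-\rho}\Bigl(\rho_{i\bar{j}}+\frac{\rho_{i}\rho_{\bar{j}}}{-\rho}\Bigr).
\end{equation*}
The bracket is a positive definite matrix plus a positive semidefinite rank-one term, so $(g_{i\bar{j}})$ is positive definite and $g$ is strictly plurisubharmonic. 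This proves the first half of (i).

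For (iii) I will use the matrix determinant lemma. Write $A=(\rho_{i\bar{j}})$ and $v=(\rho_{i})$. Then
\begin{equation*}
\det(A+(-\rho)^{-1}vv^{*})=\det A\,\bigl(1+(-\rho)^{-1}\abs{\partial\rho}^{2}\bigr).
\end{equation*}
Multiplying by the factor $(-\rho)^{-n}$ gives
\begin{equation*}
\det(g_{i\bar{j}})=(-\rho)^{-n-1}\bigl(\abs{\partial\rho}^{2}-\rho\bigr)\det(\rho_{i\bar{j}}),
\end{equation*}
which is \eqref{eq:det_formula}.

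For (ii) I will use Sherman--Morrison. With $c=(-\rho)^{-1}$, the inverse is
\begin{equation*}
g^{\bar{j}i}=(-\rho)\Bigl(\rho^{\bar{j}i}-\frac{c\,\rho^{\bar{j}k}\rho_{k}\rho_{\bar{l}}\rho^{\bar{l}i}}{1+c\abs{\partial\rho}^{2}}\Bigr).
\end{equation*}
Contracting with $g_{i}g_{\bar{j}}=\rho_{i}\rho_{\bar{j}}/\rho^{2}$ and writing $s=\abs{\partial\rho}^{2}$, I get
\begin{equation*}
\abs{\partial g}_{\omega}^{2}=\frac{1}{-\rho}\Bigl(s-\frac{cs^{2}}{1+cs}\Bigr)=\frac{1}{-\rho}\cdot\frac{s}{1+cs}=\frac{s}{s-\rho}.
\end{equation*}
Since $-\rho>0$, this gives $\abs{\partial g}_{\omega}^{2}<1$. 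This is routine algebra, but I will double-check the factors of $-\rho$.

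Completeness is the main remaining point. Because $\abs{dg}_{\omega}$ is bounded by a constant multiple of $\abs{\partial g}_{\omega}$, part (ii) shows that $g$ is Lipschitz with respect to the Riemannian distance $d_{\omega}$. Hence
\begin{equation*}
\abs{g(x)-g(x_{0})}\le C\,d_{\omega}(x,x_{0}).
\end{equation*}
Now $g\to+\infty$ at $\partial\Omega$, because $\rho$ is an exhaustion with values in $[-1,0)$, so $\rho\to0$ there. Therefore any $d_{\omega}$-bounded set lies in a sublevel set $\{g\le M\}=\{\rho\le -e^{-M}\}$, and this set is compact in $\Omega$. So closed bounded sets are compact, and Hopf--Rinow gives completeness.

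I expect no serious obstacle. The only care needed is to check the normalization constant between $\abs{dg}$ and $\abs{\partial g}$ under the paper's conventions, which in any case only affects the constant $C$.
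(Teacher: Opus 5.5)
Your proposal is correct and matches the paper's proof step for step: the same formula for $g_{i\bar{j}}$ and positivity argument, the matrix determinant lemma for (iii), the inverse (Sherman--Morrison) formula giving $\abs{\partial g}_{\omega}^{2}=\abs{\partial\rho}^{2}/(\abs{\partial\rho}^{2}-\rho)$ for (ii), and the gradient bound combined with $g\to+\infty$ for completeness. The only cosmetic difference is that you conclude completeness via Hopf--Rinow, since closed $d_{\omega}$-bounded sets lie in compact sublevel sets of $g$, whereas the paper shows directly that every divergent curve has infinite $\omega$-length; both rest on the same Lipschitz estimate for $g$.
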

	
	\begin{proof}
		Since $\rho<0$, we have
		\begin{equation}\label{eq:ddcg_formula}
			\omega=dd^{c}g
			=
			\frac{1}{-\rho}dd^{c}\rho
			+
			\frac{1}{(-\rho)^{2}}d\rho\wedge d^{c}\rho.
		\end{equation}
		Since $dd^{c}\rho>0$, this shows that $\omega>0$. Hence $g$ is strictly plurisubharmonic and $\omega$ is a K{\"a}hler metric.
		
		The matrix of $\omega$ is
		\begin{equation*}
			g_{i\bar{j}}
			=
			\frac{\rho_{i\bar{j}}}{-\rho}
			+
			\frac{\rho_{i}\rho_{\bar{j}}}{(-\rho)^{2}}.
		\end{equation*}
		By the matrix determinant lemma,
		\begin{align*}
			\det(g_{i\bar{j}})
			&=
			(-\rho)^{-n}
			\det(\rho_{i\bar{j}})
			\left(1+\frac{1}{-\rho}\sum_{i,j=1}^{n}\rho^{\bar{j}i}\rho_{i}\rho_{\bar{j}}\right) \\
			&=
			(-\rho)^{-n-1}
			\left(\abs{\partial\rho}^{2}-\rho\right)
			\det(\rho_{i\bar{j}}).
		\end{align*}
		This proves \eqref{eq:det_formula}.
		
		The inverse matrix formula gives
		\begin{equation}\label{eq:g_norm_exact}
			\abs{\partial g}_{\omega}^{2}
			=
			\frac{\abs{\partial\rho}^{2}}
			{\abs{\partial\rho}^{2}-\rho}.
		\end{equation}
		Since $\rho<0$, the denominator is larger than the numerator. Thus
		\begin{equation*}
			\abs{\partial g}_{\omega}^{2}\le 1.
		\end{equation*}
		This proves \eqref{eq:g_norm_bound}.
		
		It remains to prove completeness. Since $\rho$ is an exhaustion and $g=-\log(-\rho)$, we have $g(z)\to+\infty$ whenever $z$ leaves every compact subset of $\Omega$.
		
		Let $\gamma:[0,T)\to\Omega$ be a piecewise $C^{1}$ curve which leaves every compact subset of $\Omega$. By \eqref{eq:g_norm_bound} and the Cauchy--Schwarz inequality, there is a constant $C>0$, depending only on our convention for the associated Riemannian metric, such that
		\begin{equation*}
			\abs{\frac{d}{dt}(g\circ\gamma)(t)}
			=
			\abs{dg(\dot\gamma(t))}
			\le
			C\abs{\dot\gamma(t)}_{\omega}
		\end{equation*}
		for almost every $t$. Hence, for every $0<t<T$,
		\begin{align*}
			L_{\omega}\left(\gamma|_{[0,t]}\right)
			&=
			\int_{0}^{t}\abs{\dot\gamma(s)}_{\omega}\,ds \\
			&\ge
			C^{-1}\int_{0}^{t}\abs{\frac{d}{ds}(g\circ\gamma)(s)}\,ds \\
			&\ge
			C^{-1}\abs{g(\gamma(t))-g(\gamma(0))}.
		\end{align*}
		Since $g(\gamma(t))\to+\infty$ as $t\to T$, the length of $\gamma$ is infinite. Therefore every divergent piecewise $C^{1}$ curve has infinite $\omega$-length, and $\omega$ is complete.
	\end{proof}
	
	We also recall the lower estimate of Li--Tran, which will be used in the following form.
	
	\begin{proposition}[Li--Tran, Proposition~2.1 in \cite{2010LT}]\label{prop_litran_lower}
		Let $\Omega$ be a domain in $\C^{n}$, and let $u\in C^{2}(\Omega)$ be strictly plurisubharmonic. Let $(u_{i\bar{j}})$ be the associated K{\"a}hler metric tensor, and write $(u^{\bar{j}i})=(u_{i\bar{j}})^{-1}$. If
		\begin{equation*}
			\abs{\partial u}_{u}^{2}
			:=
			\sum_{i,j=1}^{n}u^{\bar{j}i}u_{i}u_{\bar{j}}
			\le\beta
		\end{equation*}
		for some constant $\beta>0$, then
		\begin{equation*}
			\lambda_{0}(\Delta_{u},\Omega)\ge\frac{n^{2}}{\beta},
		\end{equation*}
		where $\Delta_{u}$ denotes the Laplace--Beltrami operator associated with $(u_{i\bar{j}})$.
	\end{proposition}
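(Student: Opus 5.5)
The statement to prove is Proposition~\ref{prop_litran_lower}: if $\abs{\partial u}_u^2\le\beta$ then $\lambda_0(\Delta_u,\Omega)\ge n^2/\beta$. I would run the standard integration-by-parts argument with the potential $u$ as auxiliary function.

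\textbf{Step 1: compute $\Delta_u u$.} Since $\Delta_u=-4\sum u^{\bar j i}\partial_i\partial_{\bar j}$, we get
\begin{equation*}
\Delta_u u=-4\sum_{i,j}u^{\bar j i}u_{i\bar j}=-4n,
\end{equation*}
so $-\Delta_u u=4n$ is a positive constant. Every lower bound for $\lambda_0$ will come from this identity.

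\textbf{Step 2: integrate against $f^2$.} Fix a real $f\in C_0^\infty(\Omega)$; complex $f$ reduces to real and imaginary parts, because the quotient's numerator and denominator both split additively. With respect to $\omega_u^n$, the operator $\Delta_u$ is self-adjoint and $\int \phi\,\Delta_u\psi=4\int\sum u^{\bar j i}(\phi_i\psi_{\bar j}+\phi_{\bar j}\psi_i)/2\cdot$(normalizing factor). Cleanest is the real form: for real $\phi,\psi$,
\begin{equation*}
\int\phi\,\Delta_u\psi\,\omega_u^n=4\int\mathrm{Re}\sum u^{\bar j i}\phi_i\psi_{\bar j}\,\omega_u^n .
\end{equation*}
This follows from the divergence form $\sum_i\partial_i(\det(u)\,u^{\bar j i})=0$, which holds for Kähler metrics. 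Applying it with $\phi=f^2$ and $\psi=u$ gives
\begin{equation*}
4n\int f^2\,\omega_u^n=-4\int\mathrm{Re}\sum u^{\bar j i}(2ff_i)u_{\bar j}\,\omega_u^n .
\end{equation*}
No boundary terms arise because $f$ has compact support, so completeness is not needed.

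\textbf{Step 3: Cauchy--Schwarz.} Pointwise,
\begin{equation*}
\Bigl|\sum u^{\bar j i}f_iu_{\bar j}\Bigr|\le\abs{\partial f}_u\abs{\partial u}_u\le\sqrt\beta\,\abs{\partial f}_u .
\end{equation*}
Hence
\begin{equation*}
n\int f^2\le 2\sqrt\beta\Bigl(\int f^2\Bigr)^{1/2}\Bigl(\int\abs{\partial f}_u^2\Bigr)^{1/2},
\end{equation*}
which gives $4\int\abs{\partial f}_u^2\ge (n^2/\beta)\int f^2$. This is exactly the Rayleigh quotient in the definition of $\lambda_0$. Taking the infimum over $f$ proves the claim.

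The only point I expect to need care is the integration-by-parts identity in Step 2. One has to check that the factor $4$ and the real-part convention match the Rayleigh quotient. I would verify this locally by writing $\Delta_u\psi\cdot\det(u_{k\bar l})=-4\sum\partial_i(\det(u)\,u^{\bar j i}\psi_{\bar j})$, which uses the Kähler identity $\partial_i(\det u\, u^{\bar j i})=0$. I would then integrate this expression against the real function $\phi$.
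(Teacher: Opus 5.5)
Your proof is correct. The paper gives no proof of this proposition and only cites Li--Tran; your argument is the standard one for this estimate: integrate $-\Delta_u u=4n$ against $f^{2}$, then apply Cauchy--Schwarz using $\abs{\partial u}_u^{2}\le\beta$, and your constants (the factor $4$ and the real-part convention) match the Rayleigh quotient.

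The one point to tidy is regularity. The identity $\sum_i\partial_i(\det(u_{k\bar l})\,u^{\bar j i})=0$ involves third derivatives of $u$, so under the stated hypothesis $u\in C^{2}(\Omega)$ you should do one of the following:
\begin{itemize}
\item mollify $u$ near the support of $f$ and pass the resulting integral identity (not the inequality) to the limit in $C^{2}$;
\item phrase Step~2 as $\int f^{2}(dd^{c}u)^{n}=-\int d(f^{2})\wedge d^{c}u\wedge(dd^{c}u)^{n-1}$, using that $(dd^{c}u)^{n-1}$ is a continuous closed form.
\end{itemize}
For the smooth potential $g$ used in the paper this issue does not arise.
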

	
	\begin{remark}
		In the present paper, we shall apply Proposition~\ref{prop_litran_lower} to $u=g=-\log(-\rho)$. The metric tensor associated with $g$ is the tensor of $\omega=dd^{c}g$. By Lemma~\ref{lem_basic}, we have $\abs{\partial g}_{\omega}^{2}\le1$, and hence
		\begin{equation*}
			\lambda_{0}(\Delta_{\omega},\Omega)\ge n^{2}.
		\end{equation*}
	\end{remark}
	
	\section{The test functions}\label{sec3}
	
	In this section, we prove the main estimate for the test functions. This is the step where the finite weighted Monge--Amp{\`e}re mass condition is used.
	
	Let $\alpha>\frac{n}{2}$ and set
	\begin{equation*}
		f=(-\rho)^{\alpha}.
	\end{equation*}
	
	\begin{proposition}\label{prop_sobolev}
		Let $\Omega\subset\C^{n}$ be a bounded domain, and let $\rho:\Omega\to[-1,0)$ be a smooth strictly plurisubharmonic exhaustion function. Set $g=-\log(-\rho)$ and $\omega=dd^{c} g$. For $\alpha>\frac{n}{2}$, set $f=(-\rho)^{\alpha}$. Then
		\begin{equation}\label{eq:sobolev_equivalence}
			f\in W^{1,2}(\Omega,\omega)
			\quad\Longleftrightarrow\quad
			\int_{\Omega} (-\rho)^{2\alpha-n}(dd^{c} \rho)^{n}<+\infty.
		\end{equation}
		If the equivalent conditions in \eqref{eq:sobolev_equivalence} hold, then
		\begin{equation}\label{eq:L2_identity}
			\int_{\Omega} f^{2}\omega^{n}
			=
			\frac{2\alpha}{2\alpha-n}
			\int_{\Omega} (-\rho)^{2\alpha-n}(dd^{c} \rho)^{n},
		\end{equation}
		and
		\begin{equation}\label{eq:energy_identity}
			4\int_{\Omega} \abs{\partial f}_{\omega}^{2}\omega^{n}
			=
			2\alpha n
			\int_{\Omega} f^{2}\omega^{n}.
		\end{equation}
	\end{proposition}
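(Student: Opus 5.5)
The plan is to reduce everything to pointwise identities in terms of $\rho$, together with one integration-by-parts identity on sublevel sets. Write $\mu=(dd^{c}\rho)^{n}$ and $s=2\alpha-n>0$.

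\emph{Step 1: pointwise formulas.} By \eqref{eq:det_formula},
\begin{equation*}
f^{2}\omega^{n}=(-\rho)^{s-1}\bigl(\abs{\partial\rho}^{2}-\rho\bigr)\mu
=(-\rho)^{s-1}\abs{\partial\rho}^{2}\mu+(-\rho)^{s}\mu .
\end{equation*}
Since $\partial f=\alpha(-\rho)^{\alpha}\partial g$, \eqref{eq:g_norm_exact} gives
\begin{equation*}
\abs{\partial f}_{\omega}^{2}=\alpha^{2}f^{2}\frac{\abs{\partial\rho}^{2}}{\abs{\partial\rho}^{2}-\rho},
\end{equation*}
and hence
\begin{equation*}
\abs{\partial f}_{\omega}^{2}\omega^{n}=\alpha^{2}(-\rho)^{s-1}\abs{\partial\rho}^{2}\mu .
\end{equation*}
Since $f$ is smooth, membership in $W^{1,2}(\Omega,\omega)$ means exactly that both $A:=\int_{\Omega}(-\rho)^{s}\mu$ and $B:=\int_{\Omega}(-\rho)^{s-1}\abs{\partial\rho}^{2}\mu$ are finite. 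The direction ($\Rightarrow$) of \eqref{eq:sobolev_equivalence} is then immediate, because $\int f^{2}\omega^{n}\ge A$.

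\emph{Step 2: the key identity.} With our convention, $d\rho\wedge d^{c}\rho=\sqrt{-1}\,\partial\rho\wedge\bar\partial\rho$, and therefore
\begin{equation*}
n\,d\rho\wedge d^{c}\rho\wedge(dd^{c}\rho)^{n-1}=\abs{\partial\rho}^{2}\mu .
\end{equation*}
For a regular value $-t$ of $\rho$ (almost every $t\in(0,1)$ by Sard), the set $\Omega_{t}=\{\rho<-t\}$ is relatively compact with smooth boundary, since $\rho$ is an exhaustion. Applying Stokes to $(-\rho)^{s}d^{c}\rho\wedge(dd^{c}\rho)^{n-1}$ on $\Omega_{t}$ gives
\begin{equation*}
\int_{\Omega_{t}}(-\rho)^{s}\mu-\frac{s}{n}\int_{\Omega_{t}}(-\rho)^{s-1}\abs{\partial\rho}^{2}\mu
=t^{s}\int_{\partial\Omega_{t}}d^{c}\rho\wedge(dd^{c}\rho)^{n-1}
=t^{s}\mu(\Omega_{t}),
\end{equation*}
where the last equality uses Stokes once more.

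\emph{Step 3: letting $t\downarrow0$ (main obstacle).} The delicate point is controlling the boundary term $t^{s}\mu(\Omega_{t})$ under the sole assumption $A<\infty$; this is where no boundary regularity of $\rho$ is available. I would argue as follows. Since $(-\rho)^{s}\ge t^{s}$ on $\Omega_{t}$, the boundary term is at most $A$. For a sharper bound, fix $\delta>t$ and split $\Omega_{t}=\Omega_{\delta}\cup\{-\delta\le\rho<-t\}$. This gives
\begin{equation*}
t^{s}\mu(\Omega_{t})\le t^{s}\mu(\Omega_{\delta})+\int_{\{-\delta\le\rho<0\}}(-\rho)^{s}\mu .
\end{equation*}
The first term tends to $0$ as $t\to0$ because $\mu(\Omega_{\delta})<\infty$ by compactness. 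The second term tends to $0$ as $\delta\to0$ because it is the tail of the convergent integral $A$. Hence $t^{s}\mu(\Omega_{t})\to0$.

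Now let $t\downarrow0$ along regular values. By monotone convergence in both integrals, the identity of Step 2 yields $B=\frac{n}{s}A<\infty$. This proves ($\Leftarrow$). Moreover, Step 1 then gives
\begin{equation*}
\int_{\Omega} f^{2}\omega^{n}=A+B=\Bigl(1+\frac{n}{s}\Bigr)A=\frac{2\alpha}{2\alpha-n}A,
\end{equation*}
which is \eqref{eq:L2_identity}, and
\begin{equation*}
4\int_{\Omega}\abs{\partial f}_{\omega}^{2}\omega^{n}=4\alpha^{2}B=4\alpha^{2}\frac{n}{s}A=2\alpha n\int_{\Omega} f^{2}\omega^{n},
\end{equation*}
which is \eqref{eq:energy_identity}.
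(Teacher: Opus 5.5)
Your proof is correct and follows the paper's argument essentially step for step. It uses the same decomposition of $f^{2}\omega^{n}$ via \eqref{eq:det_formula}, the same Stokes identity on regular sublevel sets (your form $(-\rho)^{s}d^{c}\rho\wedge(dd^{c}\rho)^{n-1}$ is just $-\frac{1}{s+1}d^{c}\bigl((-\rho)^{s+1}\bigr)\wedge(dd^{c}\rho)^{n-1}$, which is what the paper integrates), and the same splitting $\Omega_{t}=\Omega_{\delta}\cup\{-\delta\le\rho<-t\}$ to show the boundary term $t^{s}\mu(\Omega_{t})$ vanishes, with only a harmless sign slip ($\partial f=-\alpha f\,\partial g$) that does not affect $\abs{\partial f}_{\omega}^{2}$.
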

	
	\begin{proof}
		By \eqref{eq:det_formula},
		\begin{align}
			\int_{\Omega} f^{2}\omega^{n}
			& =
			\int_{\Omega} (-\rho)^{2\alpha-n-1}
			\left(\abs{\partial\rho}^{2}-\rho\right)(dd^{c} \rho)^{n} \nonumber\\
			& =
			\int_{\Omega} (-\rho)^{2\alpha-n-1}\abs{\partial\rho}^{2}(dd^{c} \rho)^{n}
			+
			\int_{\Omega} (-\rho)^{2\alpha-n}(dd^{c} \rho)^{n}. \label{eq:I_decomp}
		\end{align}
		We first express the first term in \eqref{eq:I_decomp} in terms of the second one. A direct computation gives
		\begin{align}
			dd^{c}\left((-\rho)^{2\alpha-n+1}\right)
			&=
			(2\alpha-n+1)(2\alpha-n)(-\rho)^{2\alpha-n-1}d\rho\wedge d^{c} \rho \nonumber\\
			&\quad-
			(2\alpha-n+1)(-\rho)^{2\alpha-n}dd^{c} \rho. \label{eq:ddch}
		\end{align}
		Wedge-multiplying \eqref{eq:ddch} by $(dd^{c} \rho)^{n-1}$ and using
		\begin{equation}\label{eq:trace_wedge}
			d\rho\wedge d^{c} \rho\wedge(dd^{c} \rho)^{n-1}
			=
			\frac{1}{n}\abs{\partial\rho}^{2}(dd^{c} \rho)^{n},
		\end{equation}
		we obtain
		\begin{align}
			&dd^{c}\left((-\rho)^{2\alpha-n+1}\right)\wedge(dd^{c} \rho)^{n-1} \nonumber\\
			&\quad =
			\frac{(2\alpha-n+1)(2\alpha-n)}{n}
			(-\rho)^{2\alpha-n-1}\abs{\partial\rho}^{2}(dd^{c} \rho)^{n} \nonumber\\
			&\qquad-
			(2\alpha-n+1)(-\rho)^{2\alpha-n}(dd^{c} \rho)^{n}. \label{eq:wedge_identity}
		\end{align}
		Let
		\begin{equation*}
			\Omega_{\varepsilon}=
			\{z\in\Omega:-\rho(z)>\varepsilon\},
		\end{equation*}
		where $\varepsilon>0$ is a regular value of $-\rho$. Integrating \eqref{eq:wedge_identity} over $\Omega_{\varepsilon}$ gives
		\begin{align}
			&\int_{\Omega_{\varepsilon}}(-\rho)^{2\alpha-n-1}\abs{\partial\rho}^{2}(dd^{c} \rho)^{n} \nonumber\\
			&\quad =
			\frac{n}{(2\alpha-n+1)(2\alpha-n)}
			\int_{\Omega_{\varepsilon}}dd^{c}\left((-\rho)^{2\alpha-n+1}\right)\wedge(dd^{c} \rho)^{n-1} \nonumber\\
			&\qquad+
			\frac{n}{2\alpha-n}
			\int_{\Omega_{\varepsilon}}(-\rho)^{2\alpha-n}(dd^{c} \rho)^{n}. \label{eq:first_term_expression_eps}
		\end{align}
		By Stokes' theorem and $d(dd^{c} \rho)=0$,
		\begin{align}
			&\int_{\Omega_{\varepsilon}}dd^{c}\left((-\rho)^{2\alpha-n+1}\right)\wedge(dd^{c} \rho)^{n-1} \nonumber\\
			&\quad =
			\int_{\partial\Omega_{\varepsilon}}d^{c}\left((-\rho)^{2\alpha-n+1}\right)\wedge(dd^{c} \rho)^{n-1} \nonumber\\
			&\quad =
			-(2\alpha-n+1)\varepsilon^{2\alpha-n}
			\int_{\partial\Omega_{\varepsilon}}d^{c} \rho\wedge(dd^{c} \rho)^{n-1} \nonumber\\
			&\quad =
			-(2\alpha-n+1)\varepsilon^{2\alpha-n}
			\int_{\Omega_{\varepsilon}}(dd^{c} \rho)^{n}. \label{eq:stokes_boundary}
		\end{align}
		Assume now that
		\begin{equation}\label{eq:weighted_mass_alpha}
			\int_{\Omega}(-\rho)^{2\alpha-n}(dd^{c} \rho)^{n}<+\infty.
		\end{equation}
		We claim that the boundary term in \eqref{eq:stokes_boundary} tends to zero, namely
		\begin{equation}\label{eq:boundary_limit_zero}
			\varepsilon^{2\alpha-n}\int_{\Omega_{\varepsilon}}(dd^{c}\rho)^{n}
			\longrightarrow0
			\quad\text{as }\varepsilon\to0.
		\end{equation}
		Set $p=2\alpha-n>0$. Let $\delta>0$. By \eqref{eq:weighted_mass_alpha}, we can choose $\eta>0$ so small that
		\begin{equation*}
			\int_{\{0<-\rho\le\eta\}}(-\rho)^{p}(dd^{c}\rho)^{n}<\frac{\delta}{2}.
		\end{equation*}
		For $0<\varepsilon<\eta$, we split
		\begin{equation*}
			\Omega_{\varepsilon}
			=
			\Omega_{\eta}
			\cup
			\{\varepsilon<-\rho\le\eta\}.
		\end{equation*}
		Since $\Omega_{\eta}\Subset\Omega$, we have
		\begin{equation*}
			\int_{\Omega_{\eta}}(dd^{c}\rho)^{n}<+\infty.
		\end{equation*}
		Thus, after decreasing $\varepsilon>0$ if necessary,
		\begin{equation*}
			\varepsilon^{p}\int_{\Omega_{\eta}}(dd^{c}\rho)^{n}<\frac{\delta}{2}.
		\end{equation*}
		On the remaining part $\{\varepsilon<-\rho\le\eta\}$, we have $\varepsilon^{p}\le(-\rho)^{p}$. Hence
		\begin{align*}
			\varepsilon^{p}\int_{\Omega_{\varepsilon}}(dd^{c}\rho)^{n}
			&\le
			\varepsilon^{p}\int_{\Omega_{\eta}}(dd^{c}\rho)^{n}
			+
			\int_{\{\varepsilon<-\rho\le\eta\}}(-\rho)^{p}(dd^{c}\rho)^{n} \\
			&<
			\frac{\delta}{2}+\frac{\delta}{2}
			=
			\delta.
		\end{align*}
		Since $\delta>0$ was arbitrary, \eqref{eq:boundary_limit_zero} follows.
		
		Letting $\varepsilon\to0$ along regular values in \eqref{eq:first_term_expression_eps}, and using \eqref{eq:stokes_boundary} and \eqref{eq:boundary_limit_zero}, we get
		\begin{equation}\label{eq:first_term_final}
			\int_{\Omega}(-\rho)^{2\alpha-n-1}\abs{\partial\rho}^{2}(dd^{c} \rho)^{n}
			=
			\frac{n}{2\alpha-n}
			\int_{\Omega}(-\rho)^{2\alpha-n}(dd^{c} \rho)^{n}.
		\end{equation}
		Substituting \eqref{eq:first_term_final} into \eqref{eq:I_decomp} gives
		\begin{align*}
			\int_{\Omega}f^{2}\omega^{n}
			&=
			\left(\frac{n}{2\alpha-n}+1\right)
			\int_{\Omega}(-\rho)^{2\alpha-n}(dd^{c} \rho)^{n} \\
			&=
			\frac{2\alpha}{2\alpha-n}
			\int_{\Omega}(-\rho)^{2\alpha-n}(dd^{c} \rho)^{n}.
		\end{align*}
		This proves \eqref{eq:L2_identity}, and in particular the weighted mass condition implies $f\in L^{2}(\Omega,\omega^{n})$.
		
		Since $f=e^{-\alpha g}$, we have
		\begin{equation*}
			\partial f=-\alpha f\partial g.
		\end{equation*}
		Using \eqref{eq:g_norm_exact}, we compute
		\begin{align*}
			\int_{\Omega}f^{2}\abs{\partial g}_{\omega}^{2}\omega^{n}
			&=
			\int_{\Omega}(-\rho)^{2\alpha}
			\frac{\abs{\partial\rho}^{2}}{\abs{\partial\rho}^{2}-\rho}
			(-\rho)^{-n-1}
			\left(\abs{\partial\rho}^{2}-\rho\right)(dd^{c} \rho)^{n} \\
			&=
			\int_{\Omega}(-\rho)^{2\alpha-n-1}\abs{\partial\rho}^{2}(dd^{c} \rho)^{n} \\
			&=
			\frac{n}{2\alpha-n}
			\int_{\Omega}(-\rho)^{2\alpha-n}(dd^{c} \rho)^{n}.
		\end{align*}
		Together with \eqref{eq:L2_identity}, this gives
		\begin{align*}
			4\int_{\Omega}\abs{\partial f}_{\omega}^{2}\omega^{n}
			&=
			4\alpha^{2}
			\int_{\Omega}f^{2}\abs{\partial g}_{\omega}^{2}\omega^{n} \\
			&=
			4\alpha^{2}\frac{n}{2\alpha-n}
			\int_{\Omega}(-\rho)^{2\alpha-n}(dd^{c} \rho)^{n} \\
			&=
			2\alpha n
			\int_{\Omega}f^{2}\omega^{n}.
		\end{align*}
		This proves \eqref{eq:energy_identity}. Thus the weighted mass condition implies $f\in W^{1,2}(\Omega,\omega)$.
		
		Conversely, if $f\in W^{1,2}(\Omega,\omega)$, then in particular $f\in L^{2}(\Omega,\omega^{n})$. By \eqref{eq:I_decomp},
		\begin{equation*}
			\int_{\Omega}(-\rho)^{2\alpha-n}(dd^{c} \rho)^{n}
			\le
			\int_{\Omega}f^{2}\omega^{n}
			<+\infty.
		\end{equation*}
		This proves the converse implication in \eqref{eq:sobolev_equivalence}.
	\end{proof}

	\section{Proof of the main theorem}\label{sec4}
	
	We now prove the main theorem by combining the lower estimate with the calculation for the test functions in the previous section.
	
	\begin{proof}[Proof of Theorem~\ref{main_thm}]
		By Lemma~\ref{lem_basic}, the metric $\omega=dd^{c}(-\log(-\rho))$ is complete. Moreover, \eqref{eq:g_norm_bound} gives
		\begin{equation*}
			\abs{\partial g}_{\omega}^{2}\le1.
		\end{equation*}
		Applying Proposition~\ref{prop_litran_lower} with $u=g$ and $\beta=1$, we obtain
		\begin{equation}\label{eq:lower_bound}
			\lambda_{0}(\Delta_{\omega},\Omega)\ge n^{2}.
		\end{equation}
		
		It remains to prove the opposite inequality. Let $\alpha>\frac{n}{2}$ and set
		\begin{equation*}
			f=(-\rho)^{\alpha}.
		\end{equation*}
		Since $2\alpha-n>0$, the assumption \eqref{eq:weighted_mass_condition} and Proposition~\ref{prop_sobolev} imply that
		\begin{equation*}
			f\in W^{1,2}(\Omega,\omega).
		\end{equation*}
		We use the Rayleigh quotient on the closed Dirichlet form domain. Since $f\in W^{1,2}(\Omega,\omega)$, it may be used as a test function. Hence, using \eqref{eq:energy_identity},
		\begin{align*}
			\lambda_{0}(\Delta_{\omega},\Omega)
			&\le
			\frac{4\int_{\Omega}\abs{\partial f}_{\omega}^{2}\omega^{n}}
			{\int_{\Omega} f^{2}\omega^{n}}
			=2\alpha n.
		\end{align*}
		Letting $\alpha\downarrow \frac{n}{2}$, we get
		\begin{equation}\label{eq:upper_bound}
			\lambda_{0}(\Delta_{\omega},\Omega)\le n^{2}.
		\end{equation}
		Combining \eqref{eq:lower_bound} and \eqref{eq:upper_bound}, we conclude that
		\begin{equation*}
			\lambda_{0}(\Delta_{\omega},\Omega)=n^{2}.
		\end{equation*}
		The finite-mass assertion follows immediately from $0<(-\rho)^{\varepsilon}\le1$ for every $\varepsilon>0$.
	\end{proof}

	\begin{proof}[Proof of Corollary~\ref{cor:hyperconvex}]
		By Theorem~\ref{cegrell_thm}, there exists a smooth strictly plurisubharmonic exhaustion function $\rho:\Omega\to[-1,0)$ such that
		\begin{equation*}
			\int_{\Omega} (dd^{c} \rho)^{n}<+\infty.
		\end{equation*}
		The finite-mass assertion in Theorem~\ref{main_thm} applied to this $\rho$ gives the desired complete K{\"a}hler metric.
	\end{proof}
	
	We finish with two remarks concerning the curvature of the constructed metric and a possible extension to hyperconvex manifolds.
	
	\begin{remark}[Ricci form]
		For the metric $\omega=dd^{c}(-\log(-\rho))$, one has
		\begin{equation*}
			\mathrm{Ric}(\omega)
			=
			-(n+1)\omega
			-
			dd^{c}\log J(\rho),
		\end{equation*}
		where
		\begin{equation*}
			J(\rho)
			=
			\left(\abs{\partial\rho}^{2}-\rho\right)\det(\rho_{i\bar{j}}).
		\end{equation*}
		Therefore the extra term in the K{\"a}hler--Einstein equation is
		$-dd^{c}\log J(\rho)$. In the case where $\rho$ is a smooth defining
		function up to the boundary, as in \cite{2010LT}, this term can be
		controlled near the boundary. In our setting, however, we do not assume
		any boundary regularity of $\rho$. Hence the finite weighted
		Monge--Amp{\`e}re mass condition alone does not imply that $\omega$ is
		asymptotically K{\"a}hler--Einstein. It would be interesting to find
		additional conditions on $\rho$ which make this extra term small near the
		boundary.
	\end{remark}
	
	\begin{remark}[Hyperconvex manifolds]
		There is also a natural manifold version of the present question. Following
		Chen \cite{2004Chen}, a complex manifold $M$ is called hyperconvex if there
		exists a strictly plurisubharmonic function $\rho:M \to [-1,0)$ such that $\{x\in M:\rho(x)<c\}\Subset M$ for every $c<0$. Chen proved
		that every hyperconvex manifold has a complete Bergman metric.
		
		In this terminology, the condition used in this paper suggests the following
		more specific class of hyperconvex manifolds: those admitting a smooth
		strictly plurisubharmonic exhaustion $\rho:M\to[-1,0)$ such that
		\begin{equation*}
			\int_{M} (-\rho)^{\varepsilon}(dd^{c}\rho)^{n}<+\infty
			\quad\text{for every }\varepsilon>0.
		\end{equation*}
		For such an exhaustion, the same construction gives a complete K{\"a}hler
		metric $\omega=dd^{c}(-\log(-\rho))$.
		Moreover, since the proof above only uses the exhaustion property of
		$\rho$, the estimate $\abs{\partial(-\log(-\rho))}_{\omega}^{2}\le1$, and
		the finite weighted Monge--Amp{\`e}re mass condition, the same argument
		gives
		\begin{equation*}
			\lambda_{0}(\Delta_{\omega})=n^{2}.
		\end{equation*}
		It would be interesting to know which hyperconvex manifolds admit such a
		finite weighted-mass exhaustion.
	\end{remark}
	
	%\bibliographystyle{plain}
	%\bibliography{ref}

\begin{thebibliography}{1}
		
		\bibitem{2018Ahag}
		Per \AA~hag, Rafa\l{} Czy\.z, and Lisa Hed.
		\newblock The geometry of {$m$}-hyperconvex domains.
		\newblock {\em J. Geom. Anal.}, 28(4):3196--3222, 2018.
		
		\bibitem{1998Cegrell}
		Urban Cegrell.
		\newblock Pluricomplex energy.
		\newblock {\em Acta Math.}, 180(2):187--217, 1998.
		
		\bibitem{2004Cegrell}
		Urban Cegrell.
		\newblock The general definition of the complex {M}onge-{A}mp\`ere operator.
		\newblock {\em Ann. Inst. Fourier (Grenoble)}, 54(1):159--179, 2004.
		
		\bibitem{2009Cegrell}
		Urban Cegrell.
		\newblock Approximation of plurisubharmonic functions in hyperconvex domains.
		\newblock In {\em Complex analysis and digital geometry}, volume~86 of {\em
			Acta Univ. Upsaliensis Skr. Uppsala Univ. C Organ. Hist.}, pages 125--129.
		Uppsala Universitet, Uppsala, 2009.
		
		\bibitem{2004Chen}
		Bo-Yong Chen.
		\newblock Bergman completeness of hyperconvex manifolds.
		\newblock {\em Nagoya Math. J.}, 175:165--170, 2004.
		
		\bibitem{2005LW}
		Peter Li and Jiaping Wang.
		\newblock Comparison theorem for {K}\"ahler manifolds and positivity of
		spectrum.
		\newblock {\em J. Differential Geom.}, 69(1):43--74, 2005.
		
		\bibitem{2010LT}
		Song-Ying Li and My-An Tran.
		\newblock Infimum of the spectrum of {L}aplace-{B}eltrami operator on a bounded
		pseudoconvex domain with a {K}\"ahler metric of {B}ergman type.
		\newblock {\em Comm. Anal. Geom.}, 18(2):375--395, 2010.
		
		\bibitem{2009Munteanu}
		Ovidiu Munteanu.
		\newblock A sharp estimate for the bottom of the spectrum of the {L}aplacian on
		{K}\"ahler manifolds.
		\newblock {\em J. Differential Geom.}, 83(1):163--187, 2009.
		
	\end{thebibliography}

\end{document}